\theoremstyle{plain}
\newtheorem{theorem}{Theorem}[section]
\newtheorem{lemma}[theorem]{Lemma}
\newtheorem{corollary}[theorem]{Corollary}
\theoremstyle{definition}
\newtheorem{example}[theorem]{Example}
\theoremstyle{remark}
\newtheorem{remark}[theorem]{Remark}
\begin{document}
	\title[Strong metric dimension of the prime ideal sum graph of a commutative ring]{Strong metric dimension of the prime ideal sum graph of a commutative ring}

   \author[Praveen Mathil, Jitender Kumar]{Praveen Mathil$^{^1}$, Jitender Kumar$^{^{*1}}$, Reza Nikandish$^{^2}$}
   \address{$^1$Department of Mathematics, Birla Institute of Technology and Science   Pilani, Pilani-333031, India}
    \address{$^2$Department of Mathematics, Jundi-Shapur University of Technology, P.O. BOX 64615-334, Dezful, Iran}
 \email{maithilpraveen@gmail.com,  jitenderarora09@gmail.com, r.nikandish@ipm.ir}

\begin{abstract}
Let $R$ be a commutative ring with unity. The prime ideal sum graph of the ring $R$ is the simple undirected graph whose vertex set is the set of all nonzero proper ideals of $R$ and two distinct vertices $I$ and $J$ are adjacent if and only if $I + J$ is a prime ideal of $R$. In this paper, we obtain the strong metric dimension of the prime ideal sum graph for various classes of Artinian non-local commutative rings.
\end{abstract}
 \subjclass[2020]{05C25, 13A99}
\keywords{Prime ideal sum graph, strong metric dimension, Artinian ring, reduced ring  \\ *  Corresponding author}
\maketitle

\section{Introduction and Preliminaries}
Metric dimension has many applications in robot navigation, image processing, combinatorial optimization, chemistry, network security and so on (see \cite{khuller1996landmarks,oellermann2007strong,slater1975leaves}). A close parameter to the metric dimension is the strong metric dimension of a graph. Seb\H{o} and Tannier \cite{sebHo2004metric} introduced the notion of the strong metric dimension of a graph and illustrated some applications in combinatorial searching. The problem of determining the strong metric dimension of a graph is NP-hard. Many researchers obtained the strong metric dimension for various classes of graphs, see \cite{kuziak2013strong, kuziak2015strong, rodriguez2014strong}. For more detail on the study of strong metric dimension of a graph, one can refer to \cite{kratica2014strong}.
Numerous applications of strong metric dimension and its significant background motivate algebraic graph theorists to determine the strong metric dimension of graphs associated with algebraic structures such as groups and rings (see \cite{ebrahimi2021strong,ma2018strong,ma2021strong,nikandish2021metric,nikandish2022strong,zhai2023metric}).

The prime ideal sum graph of a commutative ring was introduced by Saha \emph{et al.} \cite{saha2023prime}. The \emph{prime ideal sum graph} $\text{PIS}(R)$ of the ring $R$ is the simple undirected graph whose vertex set is the set of all nonzero proper ideals of $R$ and two distinct vertices $I$ and $J$ are adjacent if and only if $I + J$ is a prime ideal of $R$. In \cite{saha2023prime}, authors studied an interplay between graph-theoretic properties of $\text{PIS}(R)$ and algebraic properties of the ring $R$. They investigated the clique number, the chromatic number and the domination number of prime ideal sum graph $\text{PIS}(R)$. Embedding of the prime ideal sum graphs on various surfaces has been studied by Mathil et al. \cite{mathil2022embedding}. Recently, the metric dimension of the prime ideal sum graph of various classes of rings has been investigated in \cite{adlifard2023metric}. This paper aims to determine the strong metric dimension of the prime ideal sum graphs of commutative rings.

A \emph{graph} $\Gamma$ is an ordered pair $(V(\Gamma), E(\Gamma))$, where $V(\Gamma)$ is the set of vertices and $E(\Gamma)$ is the set of edges of $\Gamma$. Two distinct vertices $u, v \in V(\Gamma)$ are $\mathit{adjacent}$ in $\Gamma$, denoted by $u \sim v$ (or $(u,  v)$), if there is an edge between $u$ and $v$. Otherwise, we write as $u \nsim v$. Let $\Gamma$ be a graph. A graph $\Gamma' = (V(\Gamma'), E(\Gamma'))$ is said to be a \emph{subgraph} of $\Gamma$ if $V(\Gamma') \subseteq V(\Gamma)$ and $E(\Gamma') \subseteq E(\Gamma)$.
 A \emph{path} in a graph is a sequence of distinct vertices with the property that each vertex in the sequence is adjacent to the next vertex of it. The distance $d(x,y)$ between any two vertices $x$ and $y$ of $\Gamma$ is the number of edges in a shortest path between $x$ and $y$. The \emph{diameter} \rm{diam}$(\Gamma)$ of a connected graph $\Gamma$ is the maximum of the distances between vertices in $\Gamma$. A graph $\Gamma$ is said to be \emph{complete} if any two vertices are adjacent in $\Gamma$. A complete subgraph of the graph $\Gamma$ is said to be the \emph{clique}. The \emph{clique number} $\omega(\Gamma)$ of $\Gamma$ is the cardinality of a largest clique in $\Gamma$. 

Let $\Gamma$ be a graph. A vertex $z$ resolves two distinct vertices $x$ and $y$ if $d(x, z) \neq d(y, z)$. A subset $W$ of $V(\Gamma)$ is a \emph{resolving set} of $\Gamma$ if every pair of distinct vertices of $\Gamma$ is resolved by some vertex in $W$. A vertex $w$ in $\Gamma$ strongly resolves two vertices $u$ and $v$ if there exists a shortest path between $u$ and $w$ containing $v$, or there exists a shortest path between $v$ and $w$ containing $u$. A subset $S$ of vertex set of $\Gamma$ is a \emph{strong resolving set} of $\Gamma$ if every two distinct vertices of $\Gamma$ are strongly resolved by some vertex of $S$. The \emph{strong metric dimension} $\text{sdim}(\Gamma)$ of $\Gamma$  is the smallest cardinality of a strong resolving set in $\Gamma$.

Let $R$ be a commutative ring with unity. The prime ideal sum graph $\text{PIS}(R)$ of the ring $R$ is the simple undirected graph whose vertex set is the set of all nonzero proper ideals of $R$ and two distinct vertices $I$ and $J$ are adjacent if and only if $I + J$ is a prime ideal of $R$. Throughout the paper, the ring $R$ is an Artinian non-local commutative ring with unity and $F_i$ denotes a finite field. For basic definitions of ring theory, we refer the reader to  \cite{atiyah1969introduction}. A ring $R$ is said to be \emph{local} if it has a unique maximal ideal $\mathcal{M}$, we abbreviate it as $(R, \mathcal{M})$. By $\mathcal{I}^*(R)$, we mean the set of non-trivial proper ideals of $R$. The \emph{nilpotent index} $\eta(I)$ of an ideal $I$ of $R$ is the smallest positive integer $n$ such that $I^n = 0$. 
 By the structural theorem \cite{atiyah1969introduction}, an Artinian non-local commutative ring $R$ is uniquely (up to isomorphism) a finite direct product of  local rings $R_i$ that is $R \cong R_1 \times R_2 \times \cdots \times R_n$, where $n \geq 2$.

\section{Strong metric dimension of the graph $\text{PIS}(R)$ }

In this section, we investigate the strong metric dimension of the prime ideal sum graph $\text{PIS}(R)$ of an Artinian non-local commutative ring $R$. The graph $\text{PIS}(R)$ is disconnected if and only if $R$ is a direct product of two fields (see {\cite[Lemma 2.6]{saha2023prime}}). Thus, we avoid such situation in this paper. Note that, for a connected graph, every strongly resolving set is a resolving set. Thus, the following lemma is the direct consequence of {\cite[Lemma 2.1]{adlifard2023metric}}.

\begin{lemma}
    Let $R$ be a ring. Then $\textnormal{sdim}(\textnormal{PIS}(R))$ is finite if and only if $R$ has only finitely many ideals.
\end{lemma}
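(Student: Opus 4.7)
The plan is to reduce this to the metric-dimension analogue \cite[Lemma 2.1]{adlifard2023metric}, which characterizes finiteness of the (ordinary) metric dimension of $\text{PIS}(R)$ by the finiteness of the ideal lattice of $R$. The bridge between the two notions is the observation, emphasized just before the statement, that in a connected graph every strong resolving set is a resolving set; hence a finite strong resolving set yields a finite resolving set.

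For the nontrivial direction, I would start by assuming $\textnormal{sdim}(\textnormal{PIS}(R))$ is finite, so that there is a finite set $S \subseteq V(\textnormal{PIS}(R))$ that strongly resolves every pair of vertices. Since we are working under the standing assumption that $R$ is not a product of two fields, \cite[Lemma 2.6]{saha2023prime} guarantees that $\textnormal{PIS}(R)$ is connected, and then $S$ is automatically a resolving set. Thus the metric dimension of $\textnormal{PIS}(R)$ is finite, and \cite[Lemma 2.1]{adlifard2023metric} forces $R$ to have only finitely many ideals.

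For the converse, if $R$ has finitely many ideals then $V(\textnormal{PIS}(R))$ itself is finite, and the whole vertex set is trivially a (strong) resolving set, giving $\textnormal{sdim}(\textnormal{PIS}(R)) < \infty$.

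There is essentially no obstacle here: the only subtlety is making sure that the connectivity hypothesis needed to pass from ``strong resolving'' to ``resolving'' is in force, which the excerpt has already secured by excluding the case $R \cong F_1 \times F_2$. Once connectivity is in hand, the lemma is a one-line consequence of the cited metric-dimension result plus the implication \textit{strong resolving set $\Rightarrow$ resolving set}.
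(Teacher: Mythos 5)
Your proposal is correct and matches the paper's reasoning exactly: the paper also derives this lemma directly from \cite[Lemma 2.1]{adlifard2023metric} via the observation that in a connected graph every strong resolving set is a resolving set, with connectivity secured by excluding $R \cong F_1 \times F_2$. The converse direction you spell out (finite ideal lattice gives a finite vertex set, hence finite strong metric dimension) is the same trivial half implicit in the paper's one-line justification.
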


By $N(x)$, we mean the neighborhood of the vertex $x$ and $N[x]= N(x) \cup \{ x\}$. For $x,y \in \Gamma$, define a relation $\approx$ such that $x \approx y$ if and only if $N[x]= N[y]$. Observe that $\approx$ is an equivalence relation over $V(\Gamma)$. Let $U(\Gamma)$ be the set of distinct representatives of the equivalence relation $\approx$. The reduced graph $R_\Gamma$ of $\Gamma$ has the vertex set $U(\Gamma)$ and two distinct vertices of $R_\Gamma$ are adjacent if they are adjacent in $\Gamma$. The following theorem is useful to obtain the strong metric dimension of the graph $\Gamma$.
 
\begin{theorem}{\cite[Theorem 2.2]{ma2018strong}}\label{reducedgraphsdim}
Let $\Gamma$ be a connected graph with diameter two. Then 
\[ \ \text{sdim}(\Gamma) = |V(\Gamma)|- \omega(R_\Gamma).    \]
\end{theorem}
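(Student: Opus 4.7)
The plan is to use the strong resolving graph framework of Oellermann and Peters--Fransen: define $G_{SR}(\Gamma)$ to be the graph on $V(\Gamma)$ whose edges are the pairs of mutually maximally distant vertices of $\Gamma$ (a vertex $u$ is \emph{maximally distant} from $v$ if $d(w,v)\le d(u,v)$ for every $w\in N(u)$, and the pair is mutually maximally distant when this holds with $u,v$ swapped as well). The key identity is $\textnormal{sdim}(\Gamma)=|V(\Gamma)|-\alpha(G_{SR}(\Gamma))$, where $\alpha$ denotes the independence number; this follows because the minimum strong resolving set is exactly a minimum vertex cover of $G_{SR}(\Gamma)$, together with Gallai's identity. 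The entire proof then reduces to computing $\alpha(G_{SR}(\Gamma))$ under the diameter-two hypothesis.

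The first concrete step is to determine the edges of $G_{SR}(\Gamma)$ when $\textnormal{diam}(\Gamma)=2$. If $u\nsim v$ then $d(u,v)=2$ is already the maximum possible distance, so automatically $d(w,v)\le 2$ for every $w\in N(u)$, and symmetrically; hence every non-edge of $\Gamma$ becomes an edge of $G_{SR}(\Gamma)$. If $u\sim v$ then $d(u,v)=1$, and $u$ is maximally distant from $v$ iff $d(w,v)\le 1$ for all $w\in N(u)$, i.e., $N(u)\subseteq N[v]$; using the symmetric condition and the fact that $u\in N[v]$ and $v\in N[u]$, mutual maximal distance is equivalent to $N[u]=N[v]$, i.e., $u\approx v$.

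The second step is to translate independent sets of $G_{SR}(\Gamma)$ under this description. A subset $S\subseteq V(\Gamma)$ is independent in $G_{SR}(\Gamma)$ precisely when every two vertices of $S$ are adjacent in $\Gamma$ (forbidding the non-edge case) and no two vertices of $S$ are $\approx$-equivalent (forbidding the twin case). Equivalently, $S$ is a clique in $\Gamma$ containing at most one representative from each $\approx$-class, so selecting representatives in $U(\Gamma)$ identifies $S$ with a clique of the reduced graph $R_\Gamma$; conversely, any clique of $R_\Gamma$ lifts to an independent set of the same size in $G_{SR}(\Gamma)$. Hence $\alpha(G_{SR}(\Gamma))=\omega(R_\Gamma)$, and combining with the Oellermann--Peters--Fransen identity yields the stated formula.

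The main obstacle is the case analysis in the first step, specifically the careful handling of adjacent pairs: the content there is recognising that, under the diameter-two bound, the inequality $d(w,v)\le d(u,v)=1$ forces $w\in N[v]$, so the maximally-distant condition collapses neatly onto a containment of neighborhoods. Once that is in hand, the clique/independent-set correspondence with $R_\Gamma$ is essentially formal, and the theorem follows without additional effort.
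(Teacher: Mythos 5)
Your proof is correct. Note that the paper itself gives no proof of this statement---it is quoted verbatim from Ma, Feng and Wang \cite[Theorem 2.2]{ma2018strong}---so there is no internal argument to compare against; your route (reduce $\textnormal{sdim}$ to a minimum vertex cover of the strong resolving graph via Oellermann--Peters-Fransen and Gallai, then show that under $\textnormal{diam}(\Gamma)=2$ the mutually maximally distant pairs are exactly the non-adjacent pairs together with the adjacent pairs satisfying $N[u]=N[v]$, so that independent sets of $G_{SR}(\Gamma)$ are precisely cliques of $R_\Gamma$) is the standard one and is essentially the argument of the cited source. The one point worth being explicit about is that you take $G_{SR}(\Gamma)$ on the full vertex set $V(\Gamma)$ rather than only on the boundary vertices, which is exactly what makes the Gallai step return $|V(\Gamma)|-\alpha(G_{SR}(\Gamma))$; with that convention everything checks out.
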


The following examples illustrate the Theorem \ref{reducedgraphsdim}.

\begin{example}
    Let $R \cong F_1 \times F_2 \times F_3$, where $F_i$ ($1 \le i \le 3$) is a finite field. By Figure \ref{example1}, observe that 
    \[ S = \{F_1 \times F_2 \times (0), F_1 \times (0) \times F_3, (0) \times F_2 \times F_3 \} \]
    is a minimum strong resolving set of $\textnormal{PIS}(R)$. Thus, $\text{sdim}(\textnormal{PIS}(R)) = 3$. On the other hand, note that $\textnormal{PIS}(R)$ is the reduced graph of itself and $\omega(R_{\textnormal{PIS}(R)})=3$. Consequently, by Theorem \ref{reducedgraphsdim}, $\text{sdim}(\textnormal{PIS}(R)) =6-3 = 3$.
    \begin{figure}[h!]
\centering
\includegraphics[width=0.4 \textwidth]{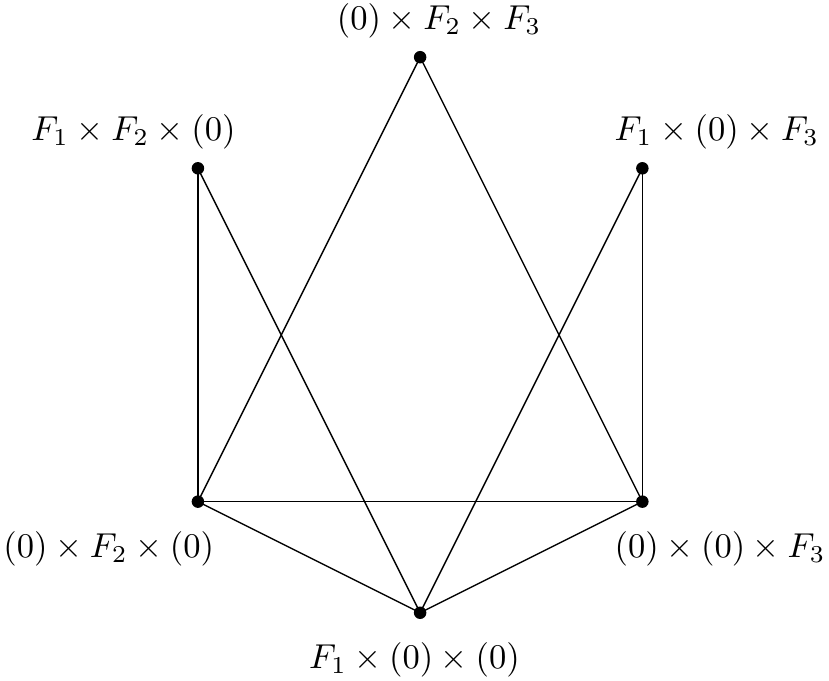}
\caption{$\text{PIS}(F_1 \times F_2 \times F_3)$}
\label{example1}
\end{figure}
\end{example}

\begin{example}
Let $R \cong \mathbb{Z}_4 \times \mathbb{Z}_9$. By Figure \ref{example2}, observe that $S = \{(2) \times (1), (0) \times (1), (2) \times (3), (2) \times (0) \}$ is a minimum strong resolving set of $\textnormal{PIS}(R)$. Thus, $\text{sdim}(\textnormal{PIS}(R)) = 4$. On the other hand, note that $R_{\textnormal{PIS}(\mathbb{Z}_4 \times \mathbb{Z}_9)} = \textnormal{PIS}(\mathbb{Z}_4 \times \mathbb{Z}_9)$ and $\omega(R_{\textnormal{PIS}(\mathbb{Z}_4 \times \mathbb{Z}_9)})=3$. Consequently, by Theorem \ref{reducedgraphsdim}, $\text{sdim}(\textnormal{PIS}(R)) =7-3 = 4$.
   \begin{figure}[h!]
\centering
\includegraphics[width=0.35 \textwidth]{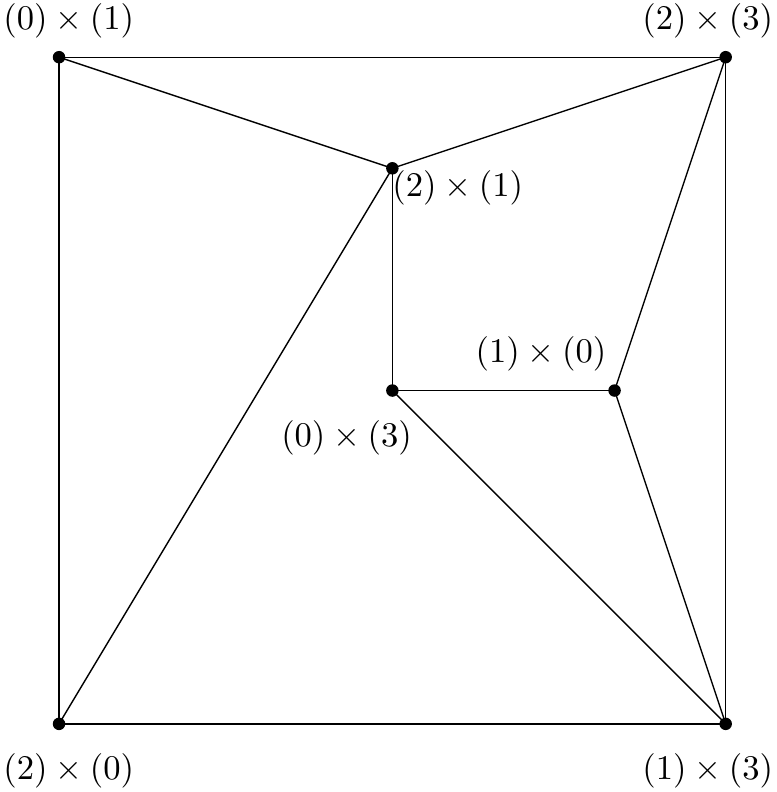}
\caption{$\text{PIS}(\mathbb{Z}_4 \times \mathbb{Z}_9)$}
\label{example2}
\end{figure}
\end{example}

\begin{example}
Let $R \cong \mathbb{Z}_8 \times \mathbb{Z}_{27}$. By Figure \ref{example3}, observe that
\[S = \{ (0) \times (9), (4) \times (0), (0) \times (1), (2) \times (1), (4) \times (9), (2) \times (0), (1) \times (0), (2) \times (9), (2) \times (3), (4) \times (1), (0) \times (3) \} \]
is a minimum resolving set of $\textnormal{PIS}(\mathbb{Z}_8 \times \mathbb{Z}_{27})$. Thus, $\text{sdim}(\textnormal{PIS}(R)) = 11$. On the other hand, note that $R_{\textnormal{PIS}( \mathbb{Z}_8 \times \mathbb{Z}_{27})} = \textnormal{PIS}( \mathbb{Z}_8 \times \mathbb{Z}_{27})$ and $\omega(R_{\textnormal{PIS}( \mathbb{Z}_8 \times \mathbb{Z}_{27})})=3$. Consequently, by Theorem \ref{reducedgraphsdim}, $\text{sdim}(\textnormal{PIS}(R)) =14-3 = 11$.
   \begin{figure}[h!]
\centering
\includegraphics[width=0.6 \textwidth]{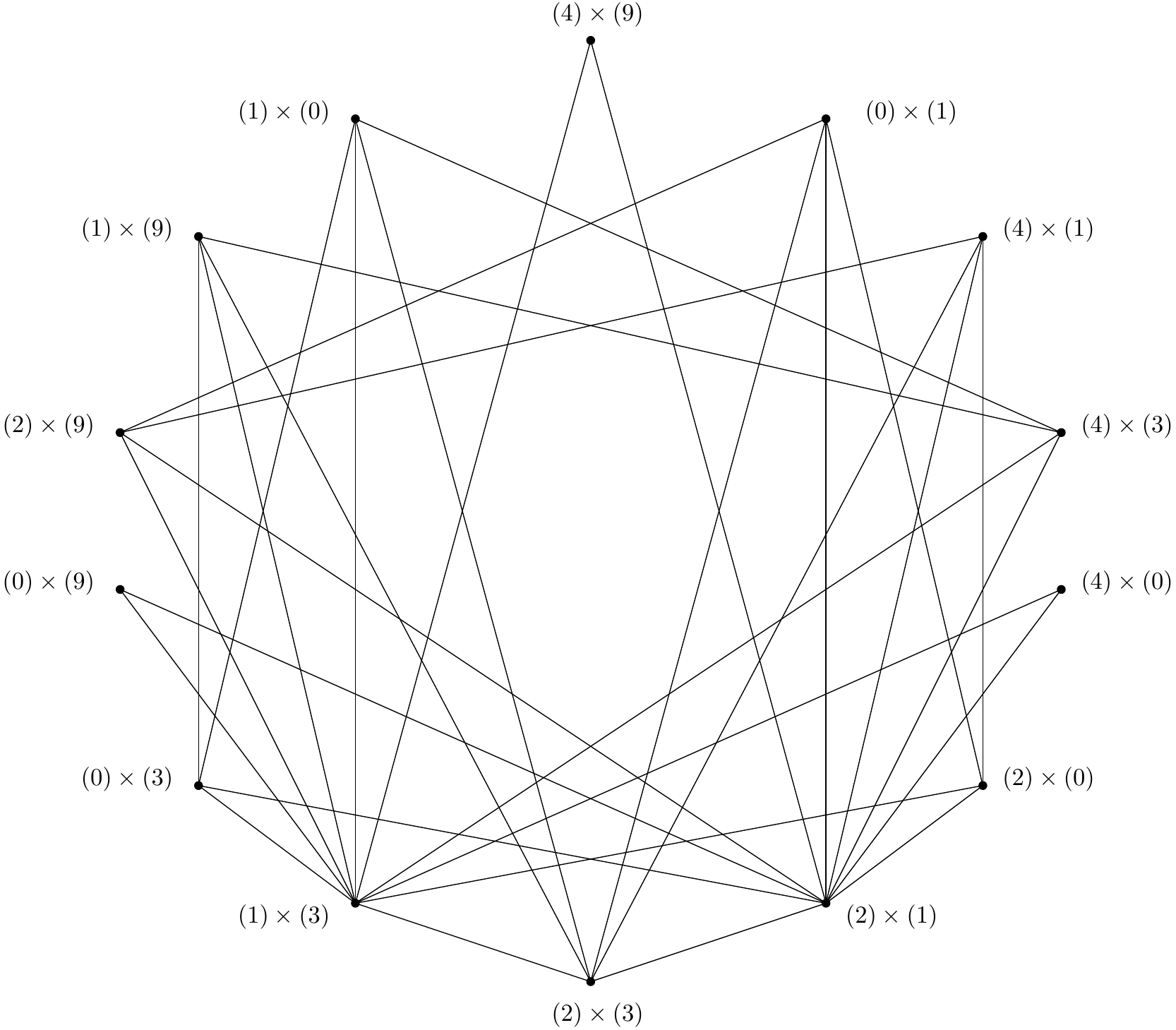}
\caption{$\text{PIS}(\mathbb{Z}_8 \times \mathbb{Z}_{27})$}
\label{example3}
\end{figure}
\end{example}

\begin{theorem}\label{daimeter}
    Let $R \cong R_1 \times R_2 \times \cdots \times R_n$  $(n \ge 2)$ be an Artinian non-local commutative ring. Then $\textnormal{diam}(\textnormal{PIS}(R)) =2$. 
\end{theorem}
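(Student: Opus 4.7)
The plan is to prove both directions of the equality $\textnormal{diam}(\textnormal{PIS}(R)) = 2$. For the lower bound, I would exhibit a pair of non-adjacent vertices. Writing the maximal ideals of $R$ as $\mathfrak{p}_k = R_1 \times \cdots \times \mathcal{M}_k \times \cdots \times R_n$, one computes $\mathfrak{p}_1 + \mathfrak{p}_2 = R$, which is not a prime ideal, so $\mathfrak{p}_1 \not\sim \mathfrak{p}_2$ and the diameter is at least $2$.

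For the upper bound, the task is to produce a common neighbor of any two non-adjacent vertices $I, J$. I would write each ideal in product form $I = I_1 \times \cdots \times I_n$ and work with the auxiliary set $T(I) = \{j : I_j \ne R_j\}$, which is exactly the set of indices of maximal ideals containing $I$. Since each $R_j$ is local, one has $A + B = R_j$ only when $A = R_j$ or $B = R_j$, which yields the key structural fact: an ideal $L$ of $R$ equals $\mathfrak{p}_k$ iff $L_j = R_j$ for all $j \ne k$ and $L_k = \mathcal{M}_k$.

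The argument then splits on whether $T(I) \cap T(J)$ is empty. If $i \in T(I) \cap T(J)$, I would take $K = \mathfrak{p}_i$; a coordinate-wise computation gives $I + K = J + K = \mathfrak{p}_i$. The only danger is $K \in \{I, J\}$, but $I = \mathfrak{p}_i$ together with $i \in T(J)$ forces $I + J = \mathfrak{p}_i$, hence $I \sim J$, contradicting the hypothesis. If instead $T(I) \cap T(J) = \varnothing$, I would pick $s \in T(I)$ and $t \in T(J)$, which exist and are distinct since $I, J$ are proper with disjoint index sets, and set $K = \mathfrak{p}_s \cap \mathfrak{p}_t$. A similar computation gives $I + K = \mathfrak{p}_s$ and $J + K = \mathfrak{p}_t$, both prime, with $K$ differing from $I$ in the $t$-th coordinate and from $J$ in the $s$-th.

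The main obstacle is verifying that the candidate $K$ is a legitimate vertex, i.e., a nonzero proper ideal. Properness is immediate from the appearance of some $\mathcal{M}_k$. Nonzeroness is automatic when $n \ge 3$, since some coordinate of $K$ equals a full $R_l \ne 0$; the delicate case is $n = 2$ in the disjoint-intersection case, where $K = \mathcal{M}_1 \times \mathcal{M}_2$ vanishes precisely when both $R_i$ are fields. This is exactly the excluded case $R \cong F_1 \times F_2$ in which $\textnormal{PIS}(R)$ is disconnected, so the construction succeeds in every setting under consideration in this paper.
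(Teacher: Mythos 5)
Your proof is correct and follows essentially the same route as the paper's: when $I$ and $J$ share a coordinate with a proper component you use the maximal ideal $\mathfrak{p}_k$ as the common neighbour, and otherwise you use $\mathfrak{p}_s \cap \mathfrak{p}_t$, which is exactly the paper's choice of $\mathcal{M}_s$ and $\mathcal{M}_t$ in positions $s,t$ and $R_r$ elsewhere. You are in fact somewhat more careful than the paper, since you also verify non-completeness for the lower bound, check that the candidate neighbour differs from $I$ and $J$, and confirm it is a nonzero proper ideal (isolating the excluded case $R \cong F_1 \times F_2$).
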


\begin{proof}
Let $R \cong R_1 \times R_2 \times \cdots \times R_n$  $(n \ge 2)$, where each $R_i$ is a local ring with maximal ideal $\mathcal{M}_i$. Let $I = I_1 \times I_2 \times \cdots \times I_n$ and $J = J_1 \times J_2 \times \cdots \times J_n$ be any two distinct vertices of $\textnormal{PIS}(R)$ such that $I \nsim J$. If there exists $ k \in \{ 1,2, \ldots, n \}$ such that $ I_k , J_k \neq R_k$, then choose $I' = R_1 \times R_2 \times \cdots \times R_{k-1} \times \mathcal{M}_k \times R_{k+1} \times \cdots \times R_n$. Then $ I \sim I' \sim J$. If there does not exists any $k \in \{ 1,2, \ldots ,n\}$ such that both $I_k$ and $J_k$ are not equal to $R_k$, then there exist distinct $s,t \in \{ 1,2, \ldots ,n\}$ such that $I_s \neq R_s$ and $I_t \neq R_t$. Then choose $I' = I_1' \times 
 I_2' \times \cdots \times I_n'$ such that $I_r' = M_r$ when $r \in \{ s,t \}$ and $I_r' = R_r$ whenever $r \in \{ 1,2, \ldots, n\} \setminus \{s,t\} $. Consequently, $ I \sim I' \sim J$. Hence, $\textnormal{diam}(\textnormal{PIS}(R)) =2$. 
 \end{proof}

 \section{Strong metric dimension of prime ideal sum graphs of reduced rings}

 In this section, we obtain the strong metric dimension of the prime ideal sum graph $\textnormal{PIS}(R)$, where $R$ is a reduced ring i.e., $R \cong F_1 \times F_2 \times \cdots \times F_n$.  

\begin{lemma}\label{reducedring_nbd}
Let $R \cong F_1 \times F_2 \times \cdots \times F_n$  $(n \ge 3)$. Then for each $ I, J \in V(\textnormal{PIS}(R))$, we have $N(I) \neq N(J)$.
\end{lemma}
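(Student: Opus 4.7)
\smallskip

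\noindent\textbf{Proof plan.} Since each $F_i$ is a field, every nonzero proper ideal of $R=F_1\times\cdots\times F_n$ has the form $I_S=\prod_{i\in S}F_i\times\prod_{i\notin S}(0)$ for a unique nonempty proper subset $S\subseteq[n]$, and the prime (in fact maximal) ideals of $R$ are exactly the $I_{[n]\setminus\{i\}}$. Because $I_S+I_T=I_{S\cup T}$, the adjacency becomes purely combinatorial: $I_S\sim I_T$ iff $S\neq T$ and $S\cup T=[n]\setminus\{j\}$ for some $j\in[n]$. The whole proof will be carried out in this subset-model.

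\smallskip

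\noindent The key auxiliary object is the set of maximal-ideal neighbors
\[
M(S):=\{\,i\in[n]\,:\,I_{[n]\setminus\{i\}}\in N(I_S)\,\}.
\]
A direct check shows $I_{[n]\setminus\{i\}}\in N(I_S)$ iff $i\notin S$ and $S\neq[n]\setminus\{i\}$; hence $M(S)=[n]\setminus S$ when $|S|\le n-2$, and $M(S)=\varnothing$ when $|S|=n-1$. If $N(I_S)=N(I_T)$, then $M(S)=M(T)$, and this already forces $S=T$ in the two cases (i) both $|S|,|T|\le n-2$ (then $[n]\setminus S=[n]\setminus T$) and (ii) exactly one of $|S|,|T|$ equals $n-1$ (impossible: one side is empty, the other is $[n]\setminus S\neq\varnothing$).

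\smallskip

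\noindent The remaining, genuinely delicate case is $|S|=|T|=n-1$, where both $M$-sets are empty and no maximal-ideal neighbor can distinguish them. Write $S=[n]\setminus\{a\}$ and $T=[n]\setminus\{b\}$ with $a\neq b$. I would exhibit the singleton ideal $I_{\{b\}}$ as a separating vertex: since $n\ge3$, $\{b\}$ is a nonempty proper subset of $[n]$, so $I_{\{b\}}$ is a legitimate vertex; moreover $\{b\}\cup S=[n]\setminus\{a\}$ is of the required prime-ideal form and $I_{\{b\}}\neq I_S$ because $1<n-1$, so $I_{\{b\}}\in N(I_S)$, whereas $\{b\}\cup T=[n]$ is not of the form $[n]\setminus\{j\}$, so $I_{\{b\}}\notin N(I_T)$. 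This contradicts $N(I_S)=N(I_T)$ and completes the proof.

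\smallskip

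\noindent The only real obstacle is the twin-maximal case above, where one has to look past maximal neighbors to a carefully chosen singleton; the hypothesis $n\ge3$ is used precisely to guarantee that this singleton is a proper ideal distinct from $I_S$ and $I_T$.
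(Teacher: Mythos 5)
Your proposal is correct and follows essentially the same strategy as the paper: both arguments split into the cases where both, exactly one, or neither of the two ideals is maximal, and in each case exhibit a neighbor of one that is not a neighbor of the other (a maximal ideal when at least one of $I,J$ is non-maximal, and a non-maximal ideal such as your $I_{\{b\}}$ when both are maximal). Your packaging via the invariant $M(S)=[n]\setminus S$ is a tidy way to dispatch the first two cases at once, but it is the same underlying witness construction as the paper's.
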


\begin{proof}
    Let $I = I_1 \times I_2 \times \cdots \times I_n$ and $J = J_1 \times J_2 \times \cdots \times J_n$ be any two distinct vertices of $\textnormal{PIS}(R)$. If both $I$ and $J$ are maximal ideals of $R$, then there exist distinct $s,t \in \{ 1,2, \ldots, n\}$ such that $I_s = J_t = (0)$. Without loss of generality, let $s =1$ and $j=2$ i.e., $I = (0) \times F_2 \times \cdots \times F_n$ and $J = F_1 \times (0) \times F_3 \times \cdots \times F_n$. Now let $I' = (0) \times F_2 \times (0) \times F_4 \times \cdots \times F_n$. Then $I' \sim I$ but $I' \nsim J$. It implies that $N(I) \neq N(J)$.
    
    Now without loss of generality, assume that $I$ is a maximal but $J$ is not a maximal ideal of $R$. Then there exists $ l \in \{ 1,2, \ldots, n\}$ such that $I_l = F_l$ and $J_l = (0)$. Now let $
    I'' = F_1 \times \cdots \times F_{l-1} \times (0) \times F_{l+1} \times \cdots \times F_n$. Then note that $I'' \sim J$ but $I'' \nsim I$. It follows that $N(I) \neq N(J)$. 

    We may now assume that both $I$ and $J$ are not maximal ideals of $R$. Then there exists $k \in  \{ 1,2, \ldots, n\}$ such that either $I_k = (0)$ and $ J_k = F_k$, or $I_k = F_k$ and $ J_k =  (0)$. Without loss of generality let $I_k = (0)$ and $ J_k = F_k$. Now let $ J' = F_1 \times \cdots \times F_{k-1} \times (0) \times F_{k+1} \times \cdots \times F_n$. Then note that $J' \sim I$ but $J' \nsim J$. Thus, $N(I) \neq N(J)$ for any $I,J \in V(\textnormal{PIS}(R))$.
\end{proof}

\begin{lemma} \label{reducedring_clique}
Let $R \cong F_1 \times F_2 \times \cdots \times F_n$  $(n \ge 3)$. Then $\omega(\textnormal{PIS}(R)) =n$.
\end{lemma}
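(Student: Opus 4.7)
The plan is to prove $\omega(\textnormal{PIS}(R)) = n$ by establishing matching upper and lower bounds.

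First, I would translate the problem into set-theoretic language. Since $R \cong F_1 \times \cdots \times F_n$ is a product of fields, every ideal of $R$ has the form $I = I_1 \times \cdots \times I_n$ with $I_l \in \{(0), F_l\}$, and is therefore determined by its support $A(I) := \{l \in [n] : I_l = F_l\}$, where $[n] := \{1, 2, \ldots, n\}$. Nonzero proper ideals correspond bijectively to nonempty proper subsets of $[n]$, and the prime (equivalently, maximal) ideals of $R$ are precisely $P_k = \prod_{l \neq k} F_l$, having support $[n] \setminus \{k\}$. Hence $I \sim J$ in $\textnormal{PIS}(R)$ if and only if $A(I) \cup A(J) = [n] \setminus \{k\}$ for some $k \in [n]$, equivalently $|A(I) \cup A(J)| = n - 1$. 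It is convenient to work with the complements $B(I) := [n] \setminus A(I)$, in which case adjacency becomes $|B(I) \cap B(J)| = 1$.

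For the lower bound $\omega \geq n$, I would exhibit an explicit clique of size $n$. Fix the maximal ideal $P_n$ and, for each $i \in \{1, \ldots, n-1\}$, let $L_i$ denote the ideal with support $[n] \setminus \{i, n\}$. The hypothesis $n \geq 3$ guarantees each $L_i$ has nonempty support, so each $L_i$ is a nonzero proper ideal of $R$. A direct verification yields $L_i + L_j = P_n$ for distinct $i, j \in \{1, \ldots, n-1\}$ and $L_i + P_n = P_n$, so $\{P_n, L_1, \ldots, L_{n-1}\}$ is a clique of size $n$ in $\textnormal{PIS}(R)$.

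For the upper bound $\omega \leq n$, let $\mathcal{C} = \{I_1, \ldots, I_m\}$ be an arbitrary clique and set $B_i := [n] \setminus A(I_i)$. Then $\{B_1, \ldots, B_m\}$ is a family of distinct nonempty proper subsets of $[n]$ with $|B_i \cap B_j| = 1$ for all $i \neq j$. I would split into two cases. In the first case, some $k \in [n]$ lies in every $B_i$; the condition $|B_i \cap B_j| = 1$ then forces $B_i \cap B_j = \{k\}$, so the sets $B_i \setminus \{k\}$ are pairwise disjoint subsets of the $(n-1)$-element set $[n] \setminus \{k\}$, and distinctness allows at most one such difference to be empty, yielding $m \leq (n-1) + 1 = n$. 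In the second case, no element belongs to all $B_i$; a singleton $B_i = \{k\}$ would force $k$ into every $B_j$ (via $B_i \cap B_j = \{k\}$), contradicting the case assumption, so $|B_i| \geq 2$ for every $i$. Let $V$ be the $m \times n$ incidence matrix whose rows are the indicator vectors of the $B_i$'s. Then $G := V V^T$ has entries $G_{ij} = |B_i \cap B_j|$, so $G = \mathcal{J}_m + D$, where $\mathcal{J}_m$ is the $m \times m$ all-ones matrix and $D = \textnormal{diag}(|B_i| - 1)$ is a positive definite diagonal matrix. Consequently $G$ is positive definite of rank $m$, and comparing with $\textnormal{rank}(VV^T) \leq n$ gives $m \leq n$.

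The main obstacle is the second case of the upper bound: the Fisher-type linear-algebra argument is short but requires recognizing the correct matrix decomposition, and giving a purely combinatorial proof of this case would involve a more intricate sunflower-style analysis of families of subsets whose pairwise intersections are singletons without a common point.
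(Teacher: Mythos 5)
Your proof is correct, but it takes a genuinely different route from the paper's. Both arguments implicitly or explicitly use the same dictionary: ideals of $F_1\times\cdots\times F_n$ are determined by their supports, and $I\sim J$ exactly when the union of supports misses a single coordinate (equivalently, the complements $B(I),B(J)$ meet in exactly one element). Your lower-bound clique $\{P_n,L_1,\dots,L_{n-1}\}$ is, up to relabelling coordinates, exactly the paper's ``Type-2'' clique. The difference is in the upper bound. The paper proves $\omega\le n$ by an exhaustive structural classification: it runs a case analysis on how many members of a putative $n$-clique have their zero in a common position, concludes that every $n$-clique is of one of two explicit types, and then checks that neither type extends to an $(n+1)$-clique; the intermediate cases are dispatched with a ``similarly, if $n-k$ $(k\ge 3)$ elements\ldots'' that is not fully written out. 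You instead observe that the complements $B_1,\dots,B_m$ form a family of distinct nonempty sets with all pairwise intersections of size exactly one, and bound $m$ by splitting into the sunflower case (a common element, handled by disjointness of the petals) and the no-common-element case, handled by the nonuniform Fisher/de Bruijn--Erd\H{o}s argument: $VV^{T}=\mathcal{J}_m+\mathrm{diag}(|B_i|-1)$ is positive definite once every $|B_i|\ge 2$, so $m=\mathrm{rank}(VV^{T})\le n$. Your version is shorter, avoids the unchecked ``similarly'' step, and is complete; what it does not give you is the paper's explicit classification of the maximum cliques (Types 1 and 2), which, however, is not needed for the lemma itself nor for Theorem~\ref{reducedringdimension}, since only the value of $\omega$ enters Theorem~\ref{reducedgraphsdim}.
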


\begin{proof}
    To prove the result, first we show that any clique of size $n$ in $\textnormal{PIS}(R)$ is of one of the following type:

    \begin{itemize}
        \item Type-1 : $\{ I_1, I_2, \ldots, I_n \}$, where $I_k = (0) \times F_2 \times \cdots \times F_{k-1} \times (0) \times F_{k+1} \times \cdots \times F_n$.
        \item Type-2 : $\{ I_1, I_2, \ldots, I_n \}$, where $I_1 = F_1 \times (0) \times \cdots \times (0)$ and $I_k = (0) \times F_2 \times \cdots \times F_{k-1} \times (0) \times F_{k+1} \times \cdots \times F_n$ for each $ 2 \le k \le n$.
    \end{itemize}

    Let $C = \{ I_1, I_2, \ldots, I_n \}$ be a clique of size $n$ in $\textnormal{PIS}(R)$.

    \noindent\textbf{Case-1.} If all $I_k$'s have the ideal $(0)$ at the same position. Without loss of generality assume that each $I_k$ has the ideal $(0)$ at first position i.e., $I_k= (0) \times J_2 \times J_3 \times \cdots \times J_n$,  where $J_i \in \{ (0), F_i \} $ for all $i \in \{ 2,3, \ldots, n\}$. Since $C$ is a clique in $\textnormal{PIS}(R)$, any two distinct $I_l, I_k \in C$ cannot have the ideal $(0)$ at the same position other than the first position. Since $|C|=n$, we deduce that $C$ is of Type-1.
    
    \noindent\textbf{Case-2.} If $n-1$ elements of $C$ have the ideal $(0)$ at same position. Without loss of generality, assume that each $I_k$ $(2 \le k \le n)$ has the ideal $(0)$ at the first position. Note that any two distinct ideals of these $n-1$ ideals in $C$ cannot have the ideal $(0)$ at the same position other than the first position. Since all these $n-1$ elements of $C$ forms a clique in $\textnormal{PIS}(R)$, we have 
    \begin{align*}
        I_2 &= (0) \times (0) \times F_3 \times \cdots \times F_n, \\ I_3 &= (0) \times F_2 \times (0) \times F_4 \times \cdots \times F_n,\\
        \vdots \\
        I_n &= (0) \times F_2 \times \cdots \times F_{n-1} \times (0).
    \end{align*}
     Since $I_1 \sim I_r$ for all $r \in \{ 2,3, \ldots, n\}$ and $I_1$ does not have the ideal $(0)$ at first position, we have $I_1 = F_1 \times (0) \times \cdots \times (0)$. Thus, $C$ is of Type-2. 

     \noindent\textbf{Case-3.} If $n-2$ elements of $C$ have the ideal $(0)$ at the same position. Without loss of generality, assume that each $I_k$ $(1 \le k \le n-2)$ has the ideal $(0)$ at the first position. Since all these $n-2$ elements of $C$ forms a clique in $\textnormal{PIS}(R)$, we conclude that
    \begin{align*}
        I_1 &= (0) \times (0) \times F_3 \times \cdots \times J_1, \\ I_2 &= (0) \times F_2 \times (0) \times F_4 \times \cdots \times J_2,\\
        \vdots \\
        I_{n-2} &= (0) \times F_2 \times \cdots \times F_{n-2} \times (0) \times J_{n-2},
    \end{align*}
    where one of $J_i$ $(1 \le i \le n-2)$ can be the ideal $(0)$. Without loss of generality let $J_{n-2} = (0)$ and $J_i = F_n$ for all $i \in \{1,2, \ldots, n-3\}$. Then $I_{n-2} = (0) \times F_2 \times \cdots \times F_{n-2} \times (0) \times (0)$. Since, $I_{n-1} \sim I_l$ for all $ l \in \{ 1,2, \ldots, n-2\}$, we have either $I_{n-1} = F_1 \times (0) \times \cdots \times (0) \times F_n$ or $I_{n-1} = F_1 \times (0) \times \cdots \times (0) \times F_{n-1} \times (0)$. But $ F_1 \times (0) \times \cdots \times (0) \times F_n \nsim F_1 \times (0) \times \cdots \times (0) \times F_{n-1} \times (0)$. Consequently, $|C| = n-1$, a contradiction. Similarly, If $J_i = F_n$ for each $i \in \{ 1,2, \ldots, n-2\}$, then $I_{n-2}= (0) \times F_2 \times \cdots \times F_{n-2} \times (0) \times F_n$ which gives a contradiction to the fact $|C| =n$.

    Similarly, If $n-k$ $(k \ge 3)$ elements of $C$ have the ideal $(0)$ at the same position, then we can  not get a clique of size $n$. Hence, any clique of size $n$ is either of Type-1 or Type-2.

    Now suppose that $S$ is a clique of size $t > n$ in $\textnormal{PIS}(R)$. Then there exists a clique $S'(\subset S)$ such that $|S'| =n$. Then $S'$ is of Type-1 or of Type-2. Suppose that $S' = \{  I_1, I_2, \ldots, I_n\}$ is of Type-1 and $I_t \in S \setminus S'$. Then $I_t \sim I_r$ for all $r \in \{ 1,2, \ldots, n\}$. Consequently, $I_t = (0) \times F_2 \times \cdots \times F_n$. This implies that $I_t \in S'$, a contradiction. We may now assume that $S'$ is a clique of Type-2 and $I_l \in S \setminus S'$. If $I_l$ has the ideal $(0)$ at the first position, then $I_l \sim I_r $ for each $ r \in \{ 2,3, \ldots, n \}$ gives $I_l = (0) \times F_2 \times \cdots \times F_n$. Then $I_l \nsim I_1$, a contradiction. If $I_l$ does not have $(0)$ at the first position, then $I_l \sim I_r$ for each $r \in \{ 2,3, \ldots, n \}$ gives $I_l = F_1 \times (0) \times \cdots \times (0)$. It implies that $I_l \in S'$, a contradiction.

    Thus, $\omega(\textnormal{PIS}(R))=n$.  
\end{proof}

In view of Theorems \ref{reducedgraphsdim}, \ref{daimeter} and Lemmas \ref{reducedring_nbd}, \ref{reducedring_clique}, we have the following Theorem.

\begin{theorem}\label{reducedringdimension}
 Let $R \cong F_1 \times F_2 \times \cdots \times F_n$  $(n \ge 3)$. Then $\textnormal{sdim}(\textnormal{PIS}(R)) = 2^n -n-2$.   
\end{theorem}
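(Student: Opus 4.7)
The plan is to combine the three preceding results in the expected way: Theorem \ref{reducedgraphsdim} reduces the problem to computing $|V(\textnormal{PIS}(R))|$ and $\omega(R_{\textnormal{PIS}(R)})$, Theorem \ref{daimeter} supplies the diameter-two hypothesis needed to invoke Theorem \ref{reducedgraphsdim}, Lemma \ref{reducedring_nbd} forces the reduced graph to equal $\textnormal{PIS}(R)$ itself, and Lemma \ref{reducedring_clique} gives the clique number.

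First, I would count the vertices. Since $R \cong F_1 \times \cdots \times F_n$, every ideal of $R$ has the form $J_1 \times \cdots \times J_n$ with $J_i \in \{(0), F_i\}$, giving $2^n$ ideals in total. Discarding the zero ideal and $R$ itself leaves $|V(\textnormal{PIS}(R))| = 2^n - 2$.

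Next, I would argue that $R_{\textnormal{PIS}(R)} = \textnormal{PIS}(R)$. For any two distinct vertices $I, J$, Lemma \ref{reducedring_nbd} produces a third ideal $I'$ adjacent to exactly one of them; since $I'$ is automatically distinct from both $I$ and $J$ (it is adjacent to one and non-adjacent to the other), $I'$ separates the closed neighborhoods too, so $N[I] \neq N[J]$. Hence every $\approx$-equivalence class is a singleton, so the reduced graph coincides with $\textnormal{PIS}(R)$, and in particular $\omega(R_{\textnormal{PIS}(R)}) = \omega(\textnormal{PIS}(R)) = n$ by Lemma \ref{reducedring_clique}.

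Finally, Theorem \ref{daimeter} guarantees $\textnormal{diam}(\textnormal{PIS}(R)) = 2$, so Theorem \ref{reducedgraphsdim} applies and yields
\[
\textnormal{sdim}(\textnormal{PIS}(R)) = |V(\textnormal{PIS}(R))| - \omega(R_{\textnormal{PIS}(R)}) = (2^n - 2) - n = 2^n - n - 2.
\]
There is essentially no obstacle here: all the substantive work has been done in the preceding lemmas. The only subtlety worth flagging is ensuring that Lemma \ref{reducedring_nbd}, which is literally phrased for open neighborhoods, is strong enough to guarantee distinct closed neighborhoods; this is immediate from the construction of $I'$ in its proof, and is what legitimises passing from $R_{\textnormal{PIS}(R)}$ back to $\textnormal{PIS}(R)$ when invoking Theorem \ref{reducedgraphsdim}.
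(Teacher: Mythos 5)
Your proposal is correct and follows exactly the route the paper intends: the paper states this theorem as an immediate consequence of Theorems \ref{reducedgraphsdim}, \ref{daimeter} and Lemmas \ref{reducedring_nbd}, \ref{reducedring_clique}, and you have simply filled in the routine details (the vertex count $2^n-2$ and the passage from distinct open to distinct closed neighborhoods via the third-vertex witness). The subtlety you flag about $N(I)\neq N(J)$ versus $N[I]\neq N[J]$ is a genuine one that the paper glosses over, and your resolution of it is sound.
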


\section{Strong metric dimension of prime ideal sum graphs of non-reduced rings}

 In this section, we compute the strong metric dimension of the prime ideal sum graph $\textnormal{PIS}(R)$ of various classes of non-reduced rings.

\begin{lemma}\label{uniqueidealring_nbd}
Let $R \cong R_1 \times R_2 \times \cdots \times R_n$  $(n \ge 2)$, where each $R_i$ is a local ring with unique non-trivial ideal. Then for each $I, J \in V(\textnormal{PIS}(R))$, we have $N(I) \neq N(J)$.
\end{lemma}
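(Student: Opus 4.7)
The plan is to parallel the strategy of Lemma \ref{reducedring_nbd}, producing for each pair of distinct vertices $I, J$ a vertex $K$ that lies in exactly one of $N(I)$ and $N(J)$ (recall that when $I \sim J$ the vertex $J$ itself already lies in $N(I) \setminus N(J)$). First I would set up a combinatorial model: each $R_i$ has only the ideals $(0), \mathcal{M}_i, R_i$, so every vertex of $\textnormal{PIS}(R)$ is a tuple $I = (I_1, \ldots, I_n)$ with $I_k \in \{(0), \mathcal{M}_k, R_k\}$. Since $\mathcal{M}_k$ is the unique prime of $R_k$, the primes of $R$ are exactly $P_k := R_1 \times \cdots \times \mathcal{M}_k \times \cdots \times R_n$, and $I \sim J$ is equivalent to the componentwise sum $I + J$ (under the total order $(0) < \mathcal{M}_k < R_k$, where the sum acts as maximum) being some $P_k$.

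I would next extract the invariant $S(I) := \{k : P_k \in N(I)\}$, which is visible from $N(I)$. A direct check yields $P_k \in N(I) \Longleftrightarrow I_k \in \{(0), \mathcal{M}_k\}$ and $I \neq P_k$; consequently $S(I) = \{k : I_k \neq R_k\}$ whenever $I$ is non-prime, and $S(I) = \emptyset$ when $I = P_k$ for some $k$. In particular, $N(I) = N(J)$ forces $S(I) = S(J)$, which I would exploit via three cases.

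\textbf{Both primes}, $I = P_k, J = P_l$ with $k \neq l$: take $K$ with $K_k = (0)$ and $K_j = R_j$ for $j \neq k$; then $P_k + K = P_k$ is prime but $P_l + K = R$ is not, so $K \in N(P_k) \setminus N(P_l)$. \textbf{Both non-prime}: the equality $S(I) = S(J)$ yields $I_j = R_j \iff J_j = R_j$, so the distinctness $I \neq J$ supplies an index $k$ with (WLOG) $I_k = (0)$ and $J_k = \mathcal{M}_k$; the same $K$ as above then satisfies $J + K = P_k$ (prime) while $(I + K)_k = (0)$ makes $I + K$ non-prime, so $K \in N(J) \setminus N(I)$. \textbf{Exactly one prime}, $I = P_k$ and $J$ non-prime: because $J$ is proper and nonzero, $S(J)$ is non-empty; any $l \in S(J)$ with $l \neq k$ yields $P_l \in N(J) \setminus N(I)$, while the sole remaining possibility $S(J) = \{k\}$ forces $J = R_1 \times \cdots \times (0) \times \cdots \times R_n$, in which case a direct calculation gives $I + J = P_k$, hence $J \in N(I) \setminus N(J)$.

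The main obstacle I anticipate is the bookkeeping in the ``both non-prime'' case: verifying $K$ is a legitimate vertex distinct from $I$ and $J$. The only coincidence that needs checking is $K = I$, which would require $I_k = (0)$ and $I_j = R_j$ for all $j \neq k$; but then the forced identifications $J_j = R_j$ for $j \neq k$ and $J_k = \mathcal{M}_k$ would give $J = P_k$, contradicting the non-primality of $J$. With this mild observation the three cases close uniformly.
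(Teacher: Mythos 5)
Your proof is correct and follows essentially the same route as the paper's: a case split on whether $I$ and $J$ are maximal (equivalently, prime) ideals of $R$, combined with an explicitly constructed vertex lying in exactly one of the two neighborhoods; your invariant $S(I)$ is a systematic repackaging of the paper's witness $R_1\times\cdots\times R_{t-1}\times\mathcal{M}_t\times R_{t+1}\times\cdots\times R_n$. A small bonus of your version is that the witness $K$ in the two-maximal case already works for $n=2$, whereas the paper's choice $\mathcal{M}_1\times R_2\times\mathcal{M}_3\times R_4\times\cdots\times R_n$ presupposes $n\ge 3$.
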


\begin{proof}
   Let $R \cong R_1 \times R_2 \times \cdots \times R_n$  $(n \ge 2)$, where each $R_i$ is a local ring with a unique non-trivial ideal $\mathcal{M}_i$. Let $I = I_1 \times I_2 \times \cdots \times I_n$ and $J = J_1 \times J_2 \times \cdots \times J_n$ be any two distinct vertices of $\textnormal{PIS}(R)$. If both $I$ and $J$ are maximal ideals of $R$, then there exist distinct $s,t \in \{ 1,2, \ldots, n\}$ such that $I_s = \mathcal{M}_s$ and $J_t = \mathcal{M}_t$. Without loss of generality, assume that $s=1$ and $t=2$ i.e., $I = \mathcal{M}_1 \times R_2 \times \cdots \times R_n$ and $J = R_1 \times \mathcal{M}_2 \times R_3 \times \cdots \times R_n$. Then choose $I' = \mathcal{M}_1 \times R_2 \times \mathcal{M}_3 \times R_4 \times \cdots \times R_n$. Note that $I' \sim I$ but $I' \nsim J$. It implies that $N(I) \neq N(J)$.

    Now assume that $I$ is maximal, but $J$ is a non-maximal ideal of $R$. Without loss of generality let $I = \mathcal{M}_1 \times R_2 \times \cdots \times R_n$. Let $J_r = (0) $ for some $r \in \{ 1,2, \ldots, n\}$. If $r=1$, then consider $I'' = (0) \times I_2'' \times \cdots \times I_n''$ such that $I'' \neq J$. Observe that $I'' \sim I$ but $I'' \nsim J$. For $r \neq 1$, we have $ J = J_1 \times \cdots \times J_{r-1} \times (0) \times J_{r+1} \times \cdots \times J_n $. Consider $J' = (0) \times J_2' \times \cdots \times J_{r-1}' \times (0) \times J_{r+1}' \times \cdots \times J_n'$ such that $J' \neq J$. Then note that $J' \sim I$ but $J' \nsim J$. Now suppose that $J_r \neq (0)$ for each $r \in \{1,2, \ldots,n \}$, then there exists $s \in \{ 1,2, \ldots, n\}$ such that $I_s = R_s$ but $J_s = \mathcal{M}_s$. Then choose $J'' = \mathcal{M}_1 \times \cdots \times \mathcal{M}_{s-1} \times (0) \times \mathcal{M}_{s+1} \times \cdots \times \mathcal{M}_n$. Then observe that $J'' \sim I $ but $J'' \nsim J$. It follows that $N(I) \neq N(J)$.

    We may now suppose that both $I$ and $J$ are non-maximal ideals of $R$. If there exist $t \in \{ 1,2, \ldots, n \}$ such that $I_t \neq R_t$ but $J_t = R_t$, then choose $I' = R_1 \times R_2 \times \cdots \times R_{t-1} \times \mathcal{M}_t \times R_{t+1} \times \cdots \times R_n$. Then $I' \sim I$ but $I' \nsim J$. If there does not exists any $t \in \{1,2, \ldots,n \} $ such that either $I_t \neq R_t$ but $J_t = R_t$, or $ I_t = R_t$ but $J_t \neq R_t$ then there exists $ l \in \{1,2, \ldots, n \} $ such that either $I_l = (0)$ and $J_l = \mathcal{M}_l$, or $I_l = \mathcal{M}_l$ and $J_l = (0)$. Without loss of generality, assume that $I_l = (0)$ and $J_l = \mathcal{M}_l$. Then take $J' = R_1 \times \cdots \times R_{l-1} \times (0) \times R_{l+1} \times \cdots \times R_n$. Observe that $J' \sim J$ but $J' \nsim I$. Thus, the result holds. 
\end{proof}

\begin{lemma} \label{uniqueideal_clique}
Let $R \cong R_1 \times R_2 \times \cdots \times R_n$  $(n \ge 2)$, where each $R_i$ is a local ring with a unique non-trivial ideal $\mathcal{M}_i$. Then $\omega(\textnormal{PIS}(R)) =n+1$.
\end{lemma}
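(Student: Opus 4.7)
The plan is to first exhibit a clique of size $n+1$ in $\textnormal{PIS}(R)$, then show that no clique can exceed this size. For the lower bound, take the ideals $P_1 = \mathcal{M}_1 \times R_2 \times \cdots \times R_n$, $Q_1 = (0) \times R_2 \times \cdots \times R_n$, and for each $k \in \{2, \ldots, n\}$ the ideal $E_k = \mathcal{M}_1 \times R_2 \times \cdots \times R_{k-1} \times \mathcal{M}_k \times R_{k+1} \times \cdots \times R_n$. A direct componentwise computation shows that the sum of any two among these $n+1$ distinct ideals equals $P_1$, which is a prime ideal of $R$, so they form a clique of size $n+1$.

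For the upper bound, I would set up the following framework. Since each $R_i$ has $\mathcal{M}_i$ as its unique prime ideal, the prime ideals of $R$ are exactly $P_k = R_1 \times \cdots \times R_{k-1} \times \mathcal{M}_k \times R_{k+1} \times \cdots \times R_n$ for $k = 1, \ldots, n$. To any non-trivial proper ideal $I = I_1 \times \cdots \times I_n$ associate the nonempty set $K(I) = \{k : I_k \neq R_k\} \subseteq \{1, \ldots, n\}$. One checks directly that $I + J = P_k$ forces $K(I) \cap K(J) = \{k\}$, so for any clique $C$, every pair $\{I, J\} \subseteq C$ satisfies $|K(I) \cap K(J)| = 1$. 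Now split into two cases. If some $I_0 \in C$ has $|K(I_0)| = 1$, say $K(I_0) = \{k_0\}$, then applying the intersection condition to pairs $(I_0, J)$ shows $k_0 \in K(J)$ for every $J \in C$, and applying it to pairs $(J, J')$ within $C$ shows $K(J) \cap K(J') = \{k_0\}$; hence the sets $K(J) \setminus \{k_0\}$ for $J \in C$ are pairwise disjoint subsets of the $(n-1)$-element set $\{1, \ldots, n\} \setminus \{k_0\}$. At most two of these can be empty, since only $P_{k_0}$ and $R_1 \times \cdots \times (0) \times \cdots \times R_n$ have $K = \{k_0\}$, and the remaining members of $C$ contribute pairwise disjoint nonempty subsets, numbering at most $n - 1$; thus $|C| \leq 2 + (n-1) = n+1$. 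Otherwise every $I \in C$ has $|K(I)| \geq 2$, which forces the $K(I)$'s to be pairwise distinct (else the intersection would have size at least $2$); then Fisher's nonuniform inequality applied to this family of distinct subsets of $\{1, \ldots, n\}$ with pairwise intersection exactly $1$ yields $|C| \leq n$. In either case $|C| \leq n+1$, giving the desired bound.

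The main obstacle is the second case, for which Fisher's nonuniform inequality provides the cleanest argument; I would cite it as a standard combinatorial result. Alternatively, one could mimic the enumeration-style case split used in Lemma \ref{reducedring_clique} by classifying clique members according to their support patterns in the factors $R_i$, but such an argument would be substantially longer and more delicate because each $R_i$ now contributes three possible components $(0), \mathcal{M}_i, R_i$ rather than just two.
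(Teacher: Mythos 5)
Your proof is correct, but it takes a genuinely different route from the paper's. The paper proves the upper bound by explicitly classifying all cliques of size $n+1$: it runs a case analysis on how many clique members have a maximal ideal in the same coordinate (all of them, $n$ of them, $n-1$ of them, \dots), shows every such clique must have a single explicit form (``Type-1''), and then checks that no such clique extends by one more vertex; the cases beyond the third are dispatched with a ``similarly'' rather than in full. You instead distill from adjacency the single combinatorial fact that the supports $K(I)=\{k: I_k\neq R_k\}$ of any two clique members meet in exactly one element, and then bound the clique size by a pure set-system argument: a sunflower-type count when some support is a singleton (where the bound $2+(n-1)=n+1$ correctly uses that each $R_{k_0}$ contributes exactly two proper ideals), and the nonuniform Fisher inequality when all supports have size at least two. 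All the individual claims check out --- the prime ideals of $R$ are exactly the $P_k$ since no $R_i$ is a domain, $I+J=P_k$ does force $K(I)\cap K(J)=\{k\}$, and your $n+1$ explicit ideals do pairwise sum to $P_1$. What you lose relative to the paper is the explicit description of the maximum cliques, which the authors reuse almost verbatim in the subsequent lemma for local principal ideal rings with more than one non-trivial ideal; what you gain is a shorter and more rigorous upper-bound argument that avoids the paper's unproved ``similarly'' step, at the cost of importing Fisher's inequality as an external (but standard) ingredient.
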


\begin{proof}
    First we show that any clique of size $n+1$ in $\textnormal{PIS}(R)$ is of the type
\begin{center}
    Type-1 : $\{ I_1, I_2, \ldots, I_{n+1} \}$, where $I_1 = \mathcal{M}_1 \times R_2 \times \cdots \times R_n$, $I_k = \mathcal{M}_1 \times R_2 \times \cdots \times R_{k-1} \times J_k \times R_{k+1} \times \cdots \times R_n $ $(2 \le k \le n)$ such that $J_k \in \{ (0), \mathcal{M}_k\}$ and $I_{n+1} = (0) \times R_2 \times \cdots \times R_n$.
 \end{center} 
 Let $C = \{ I_1, I_2, \ldots, I_{n+1} \}$ be a clique of size $n+1$ in $\textnormal{PIS}(R)$.

    \noindent\textbf{Case-1.} If all the elements of $C$ have the maximal ideal at the same position. Without loss of generality, assume that each $I_k$ has the ideal $\mathcal{M}_1$ at the first position. Then $I_k = \mathcal{M}_1 \times J_{k2} \times J_{k3} \times \cdots \times J_{kn}$, where $J_{kr} \in \{ (0), \mathcal{M}_r, R_r \} $ for all $ 2 \le r \le n$. If each $J_{kr} = R_r$, then $I_1 = \mathcal{M}_1 \times R_2 \times \cdots \times R_n$. Further note that if $J_{pr} \in \{ (0), \mathcal{M}_r\}$ for some $r \in \{2,3, \ldots, n \}$, then $J_{sr} = R_r$ for each $s \in \{1,2, \ldots, n+1 \} \setminus \{ p\}$. For $s \in \{ 2,3, \ldots, n+1 \} $, note that there are $n-1$ possibilities of $J_{st}$ such that $J_{st} \neq R_t$, where $2 \le t \le n$. To get the largest clique, we can choose exactly one such $J_{st}$ in each $I_s$ and $J_{sr} =  R_r$ for all $r \in \{2,3, \ldots , n \} \setminus \{ t\} $. Then the maximum possible elements in $C$ are 
    \begin{align*}
    I_1 &= \mathcal{M}_1 \times R_2 \times \cdots \times R_n \text{ and}\\
        I_k &= \mathcal{M}_1 \times R_2 \times \cdots \times R_{k-1} \times J_k \times R_{k+1} \times \cdots \times R_n (2 \le k \le n) \text{ such that } J_k \in \{ (0), \mathcal{M}_k\}.
    \end{align*}
    It follows that $|C| \le n < n+1$, a contradiction. Therefore, this case is not possible.

\noindent\textbf{Case-2.} If $n$ elements of $C$ have the maximal ideal at the same position. Without loss of generality, assume that each $I_k $ $(1 \le k \le n)$ has the ideal $\mathcal{M}_1$ at the first position. Then by a similar argument used in Case-1, the elements of $C$ are of the form
\begin{align*}
    I_1 &= \mathcal{M}_1 \times J_2 \times R_3 \times \cdots \times R_n,\\
     I_2 &= \mathcal{M}_1  \times R_2 \times J_3 \times R_4 \times \cdots \times R_n,\\
     \vdots \\
        I_{n-1} &= \mathcal{M}_1 \times R_2 \times \cdots \times R_{n-1} \times J_n,\\
        I_n &= \mathcal{M}_1 \times R_2 \times \cdots \times  R_n,
    \end{align*}
    where $J_k \in \{ (0), \mathcal{M}_k\}$ for all $k \in \{2,3, \ldots, n \}$. Note that since $I_{n+1} \sim I_n$, we have $I_{n+1} = (0) \times L_2 \times L_3 \times \cdots \times L_n $. Also $I_{n+1} \sim I_r $ for each $ r \in \{ 1,2, \ldots n-1\}$ implies that $I_{n+1} = (0) \times R_2 \times R_3 \times \cdots \times R_n$. Thus, $C$ is of Type-1.

    \noindent\textbf{Case-3.} If $n-1$ elements of $C$ have the maximal ideal at the same position. Without loss of generality, assume that each $I_k $ $(1 \le k \le n-1)$ has the ideal $\mathcal{M}_1$ at the first position. Then by a similar argument used in Case-1, the elements of $C$ are of the form
    \begin{align*}
         I_1 &= \mathcal{M}_1 \times J_2 \times R_3 \times \cdots \times R_n,\\
     I_2 &= \mathcal{M}_1  \times R_2 \times J_3 \times R_4 \times \cdots \times R_n,\\
     \vdots \\
     I_{n-2} &= \mathcal{M}_1 \times R_2 \times \cdots \times R_{n-2} \times J_{n-1} \times R_n,\\
        I_{n-1} &= \mathcal{M}_1 \times R_2 \times \cdots \times R_{n-1} \times L,
    \end{align*}
    where $J_k \in \{ (0), \mathcal{M}_k\}$ for all $ k \in \{ 2,3, \ldots, n-1 \}$ and $L \in \{ (0), \mathcal{M}_n, R_n\}$. If $L = R_n$, then $I_n$ can be one of these three ideals: $(0) \times R_2 \times \cdots \times R_n$, $(0) \times R_2 \times \cdots \times R_{n-1} \times (0)$ or $(0) \times R_2 \times \cdots \times R_{n-1} \times \mathcal{M}_n$. Note that none of these vertices are adjacent with each other. It implies that there does not exist $I_{n+1} \in V(\textnormal{PIS}(R))$ such that $I_{n+1} \in C$. Consequently, $|C| < n+1$, a contradiction. If $L \in \{ (0), \mathcal{M}_n\}$, Then $I_{n}$ can be one of these ideals: $(0) \times R_2 \times \cdots \times R_n$ or $R_1 \times L_2 \times \cdots \times L_n$ such that $L_i + J_i = \mathcal{M}_i$ for all $2 \le i \le n-1$ and $L + L_n= \mathcal{M}_n$. Note that none of these vertices are adjacent with each other. It implies that $|C|< n+1$, a contradiction. Therefore, this case is not possible.

     Similarly, if $n-k$ $(k \ge 2)$ elements of $C$ have the maximal ideal at a same position, then we can get a contradiction to the fact $ |C| = n+1$. Hence, any clique of the size $n+1$ is of Type-1.

     Now suppose that $S$ is a clique of size $t > n+1$ in $\textnormal{PIS}(R)$. Then there exists a clique $S'(\subset S)$ such that $|S'| =n+1$. Then $S'$ is of the Type-1. Let $S' = \{  I_1, I_2, \ldots, I_{n+1}\}$ and $I_t \in S \setminus S'$. Then $I_t \sim I_r $ for every $ r \in \{ 1,2, \ldots, n+1\}$. Consequently,  either $I_t = (0) \times R_2 \times \cdots \times R_n$ or $I_t = \mathcal{M}_1 \times R_2 \times \cdots \times R_n$. It implies that $I_t \in S'$, a contradiction. Thus,   $\omega(\textnormal{PIS}(R)) = n+1$.
\end{proof}

In view of Theorems \ref{reducedgraphsdim}, \ref{daimeter} and Lemmas \ref{uniqueidealring_nbd}, \ref{uniqueideal_clique}, we have the following Theorem.

\begin{theorem}\label{uniquidealdimension}
 Let $R \cong R_1 \times R_2 \times \cdots \times R_n$  $(n \ge 2)$, where each $R_i$ is a local ring with unique non-trivial ideal $\mathcal{M}_i$. Then $\textnormal{sdim}(\textnormal{PIS}(R)) = 3^n -n-3$.   
\end{theorem}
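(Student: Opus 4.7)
The plan is to derive this theorem as a direct application of Theorem~\ref{reducedgraphsdim}, assembling the four ingredients already established in the section. First, Theorem~\ref{daimeter} guarantees that $\textnormal{PIS}(R)$ is connected of diameter two, so the identity $\textnormal{sdim}(\Gamma) = |V(\Gamma)| - \omega(R_\Gamma)$ is applicable. Next, I would invoke Lemma~\ref{uniqueidealring_nbd}, which shows that no two distinct vertices share the same open neighborhood; since the separating ideals constructed in the proof of that lemma are, by inspection, different from both $I$ and $J$, one also obtains $N[I] \neq N[J]$, so every equivalence class of $\approx$ is a singleton and $R_{\textnormal{PIS}(R)} = \textnormal{PIS}(R)$. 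Combining this with Lemma~\ref{uniqueideal_clique} gives $\omega(R_{\textnormal{PIS}(R)}) = \omega(\textnormal{PIS}(R)) = n+1$.

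It then remains only to count the vertex set. Because each local factor $R_i$ has precisely three ideals, namely $(0)$, $\mathcal{M}_i$, and $R_i$, the product ring $R \cong R_1 \times \cdots \times R_n$ has exactly $3^n$ ideals. Discarding the zero ideal and the improper ideal $R$ itself leaves
\[
|V(\textnormal{PIS}(R))| = 3^n - 2
\]
nonzero proper ideals. Substituting into the formula from Theorem~\ref{reducedgraphsdim} would then yield
\[
\textnormal{sdim}(\textnormal{PIS}(R)) = (3^n - 2) - (n+1) = 3^n - n - 3,
\]
completing the argument.

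There is no genuine obstacle here: the theorem is essentially a bookkeeping consequence of the results already proved. The only mildly delicate point is the distinction between the open neighborhoods appearing in Lemma~\ref{uniqueidealring_nbd} and the closed neighborhoods used to define the reduced graph; I would handle this by observing explicitly that the auxiliary ideals $I'$, $I''$, $J'$, $J''$ produced in that lemma's proof are always chosen distinct from the two vertices being separated, which upgrades $N(I) \neq N(J)$ to $N[I] \neq N[J]$ and justifies $R_{\textnormal{PIS}(R)} = \textnormal{PIS}(R)$.
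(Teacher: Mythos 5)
Your proposal is correct and follows exactly the route the paper intends: the theorem is stated there as an immediate consequence of Theorem~\ref{reducedgraphsdim}, Theorem~\ref{daimeter}, Lemma~\ref{uniqueidealring_nbd} and Lemma~\ref{uniqueideal_clique}, with the same vertex count $|V(\textnormal{PIS}(R))| = 3^n - 2$. Your explicit remark that the separating vertices in Lemma~\ref{uniqueidealring_nbd} are distinct from $I$ and $J$, upgrading $N(I)\neq N(J)$ to $N[I]\neq N[J]$, is a worthwhile extra precaution that the paper leaves implicit (and which is genuinely needed, since $N(I)\neq N(J)$ alone does not imply $N[I]\neq N[J]$ for adjacent vertices).
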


\begin{corollary}
Let $R \cong R_1 \times R_2 \times \cdots \times R_n \times F_1 \times \cdots \times F_n$ $(n,m \ge 1)$, where each $R_i$ has a unique non-trivial ideal $\mathcal{M}_i$. Then
$\text{sdim}(\textnormal{PIS}(R)) = 3^n2^m -n-3$.
\end{corollary}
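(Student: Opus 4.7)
The plan is to apply Theorem \ref{reducedgraphsdim} in the same way as Theorems \ref{reducedringdimension} and \ref{uniquidealdimension}. Since Theorem \ref{daimeter} already gives $\textnormal{diam}(\textnormal{PIS}(R)) = 2$, the strategy reduces to two independent computations: the total number of vertices, and the clique number of the reduced graph $R_{\textnormal{PIS}(R)}$.

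The vertex count is immediate: an ideal of $R$ is coordinatewise, with each $R_i$ admitting three choices $\{(0), \mathcal{M}_i, R_i\}$ and each $F_j$ admitting two choices $\{(0), F_j\}$, and we discard the zero ideal and $R$ itself, giving $|V(\textnormal{PIS}(R))| = 3^n \cdot 2^m - 2$. Next, I would prove a mixed-case analog of Lemma \ref{uniqueidealring_nbd}: no two distinct vertices share the same closed neighborhood, via the same case analysis (both maximal, only one maximal, or neither maximal, further split on whether the disagreeing coordinate is local or field), using distinguishing neighbors of the same general shape as in the proofs of Lemmas \ref{reducedring_nbd} and \ref{uniqueidealring_nbd}. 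This identifies $R_{\textnormal{PIS}(R)}$ with $\textnormal{PIS}(R)$, so it remains to compute $\omega(\textnormal{PIS}(R))$.

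Next I would extend Lemma \ref{uniqueideal_clique} to show $\omega(\textnormal{PIS}(R)) = n + 1$. The target clique consists of $P = \mathcal{M}_1 \times R_2 \times \cdots \times R_n \times F_1 \times \cdots \times F_m$, together with $I_k = \mathcal{M}_1 \times R_2 \times \cdots \times J_k \times \cdots \times R_n \times F_1 \times \cdots \times F_m$ for $2 \le k \le n$ with $J_k \in \{(0), \mathcal{M}_k\}$, and $I_{n+1} = (0) \times R_2 \times \cdots \times R_n \times F_1 \times \cdots \times F_m$; every pairwise sum equals $P$. For the upper bound I would mimic the three cases of Lemma \ref{uniqueideal_clique}, grouped by how many clique elements share $\mathcal{M}_1$ at the first local coordinate, with the field coordinates playing an essentially passive role since every member of such a clique is forced to be maximal at each field component.

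Combining $|V(\textnormal{PIS}(R))| = 3^n \cdot 2^m - 2$ and $\omega(R_{\textnormal{PIS}(R)}) = n + 1$ in Theorem \ref{reducedgraphsdim} gives $\textnormal{sdim}(\textnormal{PIS}(R)) = 3^n \cdot 2^m - n - 3$. I expect the hard part to be the clique upper bound: the field factors introduce additional vertices (those with some $I'_j = (0)$) that do not appear in the purely local setting, and one has to verify that none of these can be used to build a clique larger than $n + 1$, either by joining the model clique above or by constructing a clique whose pairwise sums land in a field-coordinate prime. The key calculation should rule out extending the size-$(n+1)$ clique by showing that any ideal with a field coordinate equal to $(0)$ fails to be adjacent to at least one of the chosen $I_k$'s.
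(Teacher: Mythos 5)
Your overall strategy (diameter two plus Theorem \ref{reducedgraphsdim}, vertex count $3^n2^m-2$, then the clique number of the reduced graph) is the only plausible route, and it is surely what the authors intend, since they give no proof at all. But the step you yourself flag as the hard part is exactly where the argument breaks: the claim $\omega(\textnormal{PIS}(R))=n+1$ is false as soon as $m\ge 1$, and your proposed ``key calculation'' cannot be carried out. Let $P=\mathcal{M}_1\times R_2\times\cdots\times R_n\times F_1\times\cdots\times F_m$ and, for each field position $j$, let $D_j=\mathcal{M}_1\times R_2\times\cdots\times R_n\times F_1\times\cdots\times (0)\times\cdots\times F_m$ with the $(0)$ in the $j$-th field slot. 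Then $D_j+I=P$ for every member $I$ of your model clique (each coordinate sum is full except the first, which is $\mathcal{M}_1$), and likewise $D_j+D_{j'}=P$. So the field coordinates are not passive: your clique of size $n+1$ extends to one of size $n+m+1$. Your other auxiliary claim also fails at the boundary: for $n=m=1$ the graph is not reduced, since $(0)\times F_1$ and $\mathcal{M}_1\times F_1$ have equal closed neighborhoods.

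Worse, pushing your own method through correctly shows the stated formula is wrong except when $n=m=1$. Take $R=\mathbb{Z}_4\times\mathbb{Z}_2\times\mathbb{Z}_3$ ($n=1$, $m=2$): there are $10$ vertices, one checks directly that all closed neighborhoods are distinct, and $\{(0)\times\mathbb{Z}_2\times\mathbb{Z}_3,\ (2)\times(0)\times\mathbb{Z}_3,\ (2)\times\mathbb{Z}_2\times(0),\ (2)\times\mathbb{Z}_2\times\mathbb{Z}_3\}$ is a clique of size $4$ whose members are pairwise inequivalent, so Theorem \ref{reducedgraphsdim} gives $\textnormal{sdim}\le 10-4=6$, while the corollary predicts $3\cdot 4-1-3=8$. (For $n=m=1$ the formula survives only because the $3$-clique collapses to a $2$-clique in the reduced graph.) So the correct conclusion of your outline, once the clique bound is done honestly, is $\omega(R_{\textnormal{PIS}(R)})=n+m+1$ for $n+m\ge 3$ and hence $\textnormal{sdim}(\textnormal{PIS}(R))=3^n2^m-n-m-3$ there, not the value claimed; you should not try to prove the statement as written.
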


\begin{remark}\label{principalideal_nbd}
Let $R \cong R_1 \times R_2 \times \cdots \times R_n$ $(n \ge2)$, where each $R_i$ is a local principal ideal ring  with maximal a ideal $\mathcal{M}_i$ such that $|\mathcal{I}^*(R_i)| \ge 2$ for every $ 1 \le i \le n$. Let $I = I_1 \times I_2 \times \cdots \times I_n$, $J = J_1 \times J_2 \times \cdots \times J_n$ $\in V(\textnormal{PIS}(R))$. Then by Lemma \ref{uniqueidealring_nbd}, observe that $N(I) = N(J)$ if and only if the following conditions hold.
\begin{enumerate}
    \item $I_r = \mathcal{M}_r$ if and only if $J_r = \mathcal{M}_r$
    \item $I_r \subsetneq \mathcal{M}_r$ if and only if $J_r \subsetneq \mathcal{M}_r$
    \item $I_r = \mathcal R_r$ if and only if $J_r = R_r$
\end{enumerate}
Also, note that for any $I,J \in  V(\textnormal{PIS}(R))$ if $N(I) = N(J)$, then $I \nsim J$. Therefore, $N[I] \neq N[J]$ for each $I,J \in  V(\textnormal{PIS}(R))$.
\end{remark}

\begin{lemma} \label{principalideal_clique}
Let $R \cong R_1 \times R_2 \times \cdots \times R_n$  $(n \ge 2)$, where each $(R_i, \mathcal{M}_i)$ is a local principal ideal ring such that $|\mathcal{I}^*(R_i)| \ge 2$ for every $ 1 \le i \le n$. Then $\omega(\textnormal{PIS}(R)) =n+1$.
\end{lemma}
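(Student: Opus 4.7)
The plan is to follow the approach of Lemma \ref{uniqueideal_clique}, replacing the role of ``the unique non-trivial ideal $\mathcal{M}_i$ of $R_i$'' with ``the chain of non-trivial ideals of $R_i$''. The key starting observation is that in the Artinian ring $R = R_1 \times \cdots \times R_n$ the prime ideals are exactly the maximal ideals $P_k := R_1 \times \cdots \times R_{k-1} \times \mathcal{M}_k \times R_{k+1} \times \cdots \times R_n$, so $I \sim J$ in $\textnormal{PIS}(R)$ if and only if there exists $k \in \{1,\ldots,n\}$ with $I_k + J_k = \mathcal{M}_k$ and $I_r + J_r = R_r$ for all $r \neq k$. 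Since each $(R_i, \mathcal{M}_i)$ is a local principal ideal ring, its ideals form a totally ordered chain under inclusion; in particular, the sum of two proper ideals of $R_i$ equals the larger of the two, so $I_k + J_k = \mathcal{M}_k$ forces at least one of $I_k, J_k$ to equal $\mathcal{M}_k$.

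For the inequality $\omega(\textnormal{PIS}(R)) \ge n+1$ I would exhibit an explicit clique. Set
\[
I_1 = \mathcal{M}_1 \times R_2 \times \cdots \times R_n, \qquad I_{n+1} = (0) \times R_2 \times \cdots \times R_n,
\]
and for $2 \le k \le n$ put $I_k = \mathcal{M}_1 \times R_2 \times \cdots \times R_{k-1} \times J_k \times R_{k+1} \times \cdots \times R_n$ where $J_k$ is any fixed proper ideal of $R_k$ (such a $J_k$ exists since $|\mathcal{I}^*(R_k)| \ge 2$). A direct computation gives $I_p + I_q = P_1$ for every pair $p \neq q$, and the ideals $I_1,\ldots,I_{n+1}$ are pairwise distinct by inspection of their coordinates. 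Hence $\{I_1,\ldots,I_{n+1}\}$ is a clique of size $n+1$ in $\textnormal{PIS}(R)$.

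For the upper bound $\omega(\textnormal{PIS}(R)) \le n+1$ I would run through the same case analysis as in the proof of Lemma \ref{uniqueideal_clique}. Given a clique $C = \{I_1,\ldots,I_t\}$, split cases by how many elements of $C$ have $\mathcal{M}_r$ as their $r$-th coordinate for a common position $r$. In the governing case (the analogue of Case~2 there), say $t-1$ elements of $C$ have $(I_p)_1 = \mathcal{M}_1$: adjacency forces that for each $r \ge 2$, at most one element of $C$ may have a proper $r$-th coordinate (otherwise two such elements would sum to a proper ideal at position $r$, contradicting adjacency), while the remaining ``$t$-th'' element must be $(0) \times R_2 \times \cdots \times R_n$. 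This caps $t$ at $n+1$. The remaining sub-cases each produce a contradiction exactly as in the proof of Lemma \ref{uniqueideal_clique}, and an extra element added to the extremal clique is shown to coincide with one already present.

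The main obstacle is verifying that the case analysis of Lemma \ref{uniqueideal_clique} still applies here, where each $R_i$ has more than one non-trivial ideal. The essential point is that the adjacency condition depends only on whether each coordinate equals $R_r$, equals $\mathcal{M}_r$, or is strictly contained in $\mathcal{M}_r$, together with the chain structure governing sums; it does not depend on the specific choice of proper subideal. Thus every assertion of the form ``the sum of two proper ideals of $R_i$ is a proper ideal'' continues to hold, and the argument of Lemma \ref{uniqueideal_clique} transfers essentially verbatim, yielding $\omega(\textnormal{PIS}(R)) = n+1$.
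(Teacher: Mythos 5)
Your proposal is correct and follows essentially the same route as the paper, which likewise proves this lemma by transferring the case analysis of Lemma \ref{uniqueideal_clique}; your observation that adjacency depends only on the trichotomy $I_r = R_r$, $I_r = \mathcal{M}_r$, $I_r \subsetneq \mathcal{M}_r$ (via the chain structure of ideals in an Artinian local principal ideal ring, so that sums of comparable ideals equal the larger one) is exactly the point that makes the transfer work. The only minor discrepancy is that the extremal cliques need not have $(0) \times R_2 \times \cdots \times R_n$ as the $(n+1)$-st element --- the paper allows any $L \times R_2 \times \cdots \times R_n$ with $L \subsetneq \mathcal{M}_1$ --- but this does not affect the bound.
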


\begin{proof}
    In the similar lines of the proof of Lemma \ref{uniqueideal_clique}, we can prove the desired result by showing that any clique of size $n+1$ is of the type
    \begin{center}
    $\{ I_1, I_2, \ldots, I_{n+1} \}$, where $I_1 = \mathcal{M}_1 \times R_2 \times \cdots \times R_n$, $I_k = \mathcal{M}_1 \times R_2 \times \cdots \times R_{k-1} \times J_k \times R_{k+1} \times \cdots \times R_n $ $(2 \le k \le n)$ such that $J_k \in \{ (0), \mathcal{M}_k\}$ and $I_{n+1} = L \times R_2 \times \cdots \times R_n$ with $L \in \{(0), \mathcal{M}_1^2, \mathcal{M}_1^3, \ldots, \mathcal{M}_1^{\eta_{(\mathcal{M}_1)}} \}$.
 \end{center}  
\end{proof}

In view of Theorems \ref{reducedgraphsdim}, \ref{daimeter}, Remark \ref{principalideal_nbd} and Lemma \ref{principalideal_clique}, we have the following Theorem.

\begin{theorem}
Let $R \cong R_1 \times R_2 \times \cdots \times R_n$, where each $R_i$ is a local principal ideal ring such that $|\mathcal{I}^*(R_i)| \ge 2$ for every $ 1 \le i \le n$. Then
$\text{sdim}(\textnormal{PIS}(R)) = |V(\textnormal{PIS}(R))| - n-1$.
\end{theorem}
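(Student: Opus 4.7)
The plan is to apply Theorem \ref{reducedgraphsdim} directly, since all of the ingredients have already been established in the preceding lemmas and remarks. First, I would observe that Theorem \ref{daimeter} gives $\textnormal{diam}(\textnormal{PIS}(R)) = 2$, so $\textnormal{PIS}(R)$ is connected of diameter two and the hypothesis of Theorem \ref{reducedgraphsdim} is met.

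Next, I would argue that the reduced graph $R_{\textnormal{PIS}(R)}$ actually coincides with $\textnormal{PIS}(R)$ itself. This is where Remark \ref{principalideal_nbd} does the work: it records that while $N(I) = N(J)$ is possible for distinct $I, J \in V(\textnormal{PIS}(R))$, in every such case one has $I \nsim J$, and therefore $N[I] \neq N[J]$. Consequently each $\approx$-equivalence class is a singleton, so $U(\textnormal{PIS}(R)) = V(\textnormal{PIS}(R))$ and $R_{\textnormal{PIS}(R)} = \textnormal{PIS}(R)$. In particular $\omega(R_{\textnormal{PIS}(R)}) = \omega(\textnormal{PIS}(R))$.

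Finally, Lemma \ref{principalideal_clique} tells us that $\omega(\textnormal{PIS}(R)) = n+1$. Plugging everything into Theorem \ref{reducedgraphsdim} gives
\[
\textnormal{sdim}(\textnormal{PIS}(R)) \;=\; |V(\textnormal{PIS}(R))| - \omega(R_{\textnormal{PIS}(R)}) \;=\; |V(\textnormal{PIS}(R))| - (n+1),
\]
which is the claimed formula.

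There is no real obstacle here, as the theorem is essentially a corollary assembled from the previous results; the proof is a one-line application of Theorem \ref{reducedgraphsdim}. The only subtle point worth highlighting is the correct reading of Remark \ref{principalideal_nbd}: although open neighborhoods may coincide, closed neighborhoods never do, which is exactly the property needed for the reduced graph to equal $\textnormal{PIS}(R)$ and hence for the clique number computed in Lemma \ref{principalideal_clique} to be the one that appears in the formula.
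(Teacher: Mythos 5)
Your proposal is correct and follows exactly the route the paper intends: the theorem is stated as an immediate consequence of Theorem \ref{reducedgraphsdim}, Theorem \ref{daimeter}, Remark \ref{principalideal_nbd} (which gives $N[I]\neq N[J]$ for all distinct vertices, so the reduced graph is $\textnormal{PIS}(R)$ itself), and Lemma \ref{principalideal_clique}. Your write-up simply makes explicit the assembly that the paper leaves implicit.
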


\vspace{.3cm}
\textbf{Acknowledgement:} The first author gratefully acknowledges Birla Institute of Technology and Science (BITS) Pilani, Pilani campus, India, for providing financial support.


\end{document}